\documentclass[11pt]{amsart}

\usepackage{amsmath,amssymb,mathtools}
\usepackage{microtype}
\usepackage[colorlinks=true,linkcolor=blue,citecolor=blue,urlcolor=blue]{hyperref}

\newtheorem{theorem}{Theorem}[section]
\newtheorem{proposition}[theorem]{Proposition}
\newtheorem{lemma}[theorem]{Lemma}

\theoremstyle{definition}

\newcommand{\Ant}{\mathcal N}
\newcommand{\Layer}{\mathcal A}
\newcommand{\HH}{\mathbb H}

\newcommand{\C}{\mathbb C}
\newcommand{\PF}{\mathrm{PF}_{\infty}}
\newcommand{\lc}{\operatorname{lc}}

\title[Real stability with a two-element factor]
{Real stability of layer-refined antichain polynomials for three-chain products with a two-element factor}
\author{Weiqi Jiang}
\address{Institute of Theoretical Physics, Chinese Academy of Sciences,
No. 55 Zhong Guan Cun East Road, Haidian District, Beijing 100190,
P. R. China}
\email{jiangweiqi@itp.ac.cn}
\date{September 8, 2026}
\subjclass[2020]{Primary 06A07; Secondary 05A15, 33C45}
\keywords{antichain polynomial, real stability, product poset, nonintersecting lattice paths, Jacobi polynomial, interlacing, gamma-positivity}
\hypersetup{
  pdflang={en-US},
  pdftitle={Real stability of layer-refined antichain polynomials for three-chain products with a two-element factor},
  pdfauthor={Weiqi Jiang},
  pdfsubject={Antichain polynomials of products of three chains},
  pdfkeywords={antichain polynomial, real stability, product poset, Jacobi polynomial, gamma-positivity}
}

\begin{document}

\begin{abstract}
For all positive integers \(n,k\), we prove that the layer-refined
antichain polynomial of the product poset \([2]\times[n]\times[k]\) is
real stable.  Jacobi-polynomial interlacing further shows that its diagonal
specialization, the ordinary antichain polynomial of the same poset, has only
simple, strictly negative zeros.  For the special family
\([2]\times[m]\times[m+1]\), explicit reciprocal identities give
palindromicity; reciprocal pairing of the simple negative zeros then shows
that every coefficient in the gamma expansion is strictly positive.  Thus we
prove Conjecture~4.3 of Ding and Dong and resolve all parts of their
Conjecture~4.5, while strengthening its stated gamma-positivity consequence.
The enumerative input is an explicit first-crossing reflection for two lattice
paths, specialized from work of Krattenthaler and Sulanke.
\end{abstract}

\maketitle

\section{Introduction}

For a nonnegative integer \(r\), write \([r]=\{1,\ldots,r\}\), with
\([0]=\varnothing\).  Products of
chains are ordered coordinatewise.  An \emph{antichain} is a subset whose
elements are pairwise incomparable.  If \(P\) is a finite poset, its
\emph{antichain polynomial} is
\[
  \Ant_P(x)=\sum_{S\subseteq P\,\text{an antichain}}x^{|S|}.
\]
Ding and Dong conjectured that
\(\Ant_{[2]\times[n]\times[k]}(x)\) is real-rooted and that, when
\(k=n+1\), it is palindromic with the stated gamma-positive consequence
\cite[Conjectures~4.3 and~4.5]{DingDong}.  See Dong and Weng
\cite{DongWeng} for the representation-theoretic background.  The results
below settle both conjectures and, more strongly, show that every gamma
coefficient in the latter family is strictly positive.

Let
\[
  L_i=\{i\}\times[n]\times[k]\qquad(i=1,2)
\]
be the two layers, and define
\[
  \Layer_{n,k}(u,v)
  =\sum_{S\,\text{an antichain}}
       u^{|S\cap L_1|}v^{|S\cap L_2|}.
\]
Following the standard half-plane terminology \cite{Branden}, a polynomial
with real coefficients is \emph{real stable} if it does not vanish when all
of its variables lie in
\(\HH=\{z\in\C:\operatorname{Im}z>0\}\).  The two layer weights retain
information that is lost under diagonal specialization.  Real stability
controls their simultaneous complex variation in the upper half-plane,
whereas Conjecture~4.3 concerns only the one-variable specialization
\(u=v=x\).  Thus the layer-refined statement below is
a strengthening of diagonal real-rootedness.

\begin{theorem}\label{thm:main-stability}
For all positive integers \(n,k\), the polynomial
\(\Layer_{n,k}(u,v)\) is real stable.
\end{theorem}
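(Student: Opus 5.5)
Antichains of \([2]\times[n]\times[k]\) are described by their layer parts: \(S\cap L_1\) and \(S\cap L_2\) are antichains of \([n]\times[k]\). Each one corresponds to a lattice path in an \(n\times k\) box whose corners carry marks. The pair \((S_1,S_2)\) is an antichain of the product exactly when no element of \(S_1\) lies below an element of \(S_2\) in \([n]\times[k]\). In the path language this says the two order ideals generated are nested in a prescribed way, so the two paths are weakly noncrossing.

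The plan is therefore to write
\[
\Layer_{n,k}(u,v)=\sum_{a,b} c_{a,b}\,u^a v^b,
\]
where \(c_{a,b}\) counts noncrossing pairs of paths with \(a\), respectively \(b\), marked corners. A single antichain of \([n]\times[k]\) of size \(a\) corresponds to a path with \(a\) valleys. Hence the one-layer polynomial is the Narayana-type generating function \(\sum_a \binom{n}{a}\binom{k}{a}x^a\). This is a Jacobi polynomial, with only real negative zeros.

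The first key step is the first-crossing reflection, specialized from Krattenthaler--Sulanke. It should give a determinantal formula
\[
c_{a,b}=\binom{n}{a}\binom{k}{a}\binom{n}{b}\binom{k}{b}-\binom{n}{a'}\binom{k}{a''}\binom{n}{b'}\binom{k}{b''},
\]
with indices shifted by one, because reflection trades the valley counts of the two paths in a controlled way. In polynomial form this reads
\[
\Layer_{n,k}(u,v)=f(u)f(v)-g(u)h(v).
\]
Here \(f(x)=\sum\binom{n}{a}\binom{k}{a}x^a\), and \(g,h\) are neighbouring Jacobi-type polynomials, essentially \(\sum\binom{n}{a}\binom{k}{a+1}x^a\) and \(\sum\binom{n}{a+1}\binom{k}{a}x^a\) up to a factor of \(x\).

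The second step is to prove real stability of the \(2\times2\) determinant
\[
\det\begin{pmatrix} f(u)& g(u)\\ h(v)& f(v)\end{pmatrix}.
\]
The standard criterion is the bivariate Hermite--Biehler / Borcea--Brändén lemma. If \(p,q\) are real-rooted with \(q\) interlacing \(p\) and a positive Wronskian-type sign, then \(p(u)q(v)-q(u)p(v)\) behaves well. More useful here is the fact that \(F(u)G(v)-G(u)F(v)\) is nonvanishing on \(\HH^2\) (up to sign) exactly when \(F,G\) have interlacing real zeros.

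I would try to rewrite the expression in that form. The rewriting uses contiguous relations for Jacobi polynomials, for instance \(g(x)\,x\)-multiples and \(h\) being expressible through \(f\) and \(f'\), or through three-term relations linking \(f\) with \(g\) and \(h\). Concretely, I would check whether
\[
f(u)f(v)-g(u)h(v)=\frac{F(u)G(v)-G(u)F(v)}{u-v}
\]
(a Christoffel--Darboux shape) holds for some pair \(F,G\) of consecutive Jacobi polynomials. If it does, stability follows from the Christoffel--Darboux kernel argument: \((F(u)G(v)-G(u)F(v))/(u-v)\) is real stable whenever \(F,G\) have strictly interlacing zeros. This is the known result that Bezoutians and kernels of interlacing pairs are stable.

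The main obstacle is exactly this identification. The reflection formula must be arranged, possibly after multiplying by a monomial or applying a degree-preserving inversion \(u\mapsto 1/u\), so that it matches a Christoffel--Darboux or Bezoutian expression. One must then verify strict interlacing of \(F,G\) via Jacobi parameters and degenerate edge cases such as \(n=1\) or \(n\ne k\). If the exact kernel form fails, the fallback is a direct argument. Fix \(v\in\HH\) and show that \(u\mapsto \Layer_{n,k}(u,v)\) has no zeros in \(\HH\) by writing it as \(f(u)\bigl(f(v)-\frac{g(u)}{f(u)}h(v)\bigr)\). Interlacing then makes \(g/f\) a Nevanlinna-type function of definite sign. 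One then compares arguments, using \(h(v)/f(v)\) of the opposite sign and magnitude control from interlacing.
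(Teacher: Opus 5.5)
You have a genuine gap in the stability step. The main route you propose cannot work. The reflection actually yields $\Layer_{n,k}(u,v)=F(u)F(v)-C\,G(u)G(v)$, with $F=F_{n,k}$, $G=xT_{n,k}$ and $C=\binom{n+1}{2}\binom{k+1}{2}>0$. Its coefficient matrix in the monomials $u^iv^j$, $0\le i,j\le m$, therefore has rank at most $2$. A kernel $(P(u)Q(v)-Q(u)P(v))/(u-v)$ of bidegree $(m,m)$ is a Bezoutian with $\max(\deg P,\deg Q)=m+1$, so its rank is $m+1-\deg\gcd(P,Q)$, which equals $m+1$ for a strictly interlacing pair. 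Monomial factors and inversion do not change rank, so no Christoffel--Darboux identification exists once $m\ge2$.

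The lemma you would invoke is also false. $F(u)G(v)-G(u)F(v)$ vanishes on the whole diagonal $u=v\in\HH$. The divided kernel is not real stable either: for the interlacing pair $P=u^2-1$, $Q=u$ it equals $uv+1$, which vanishes at $u=v=i$. Interlacing only gives nonvanishing for $u\in\HH$, $v\in-\HH$, not on $\HH^2$.

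The missing idea is the structure of the off-diagonal term, which your guessed formula hides. The reflection gives $\binom{n+1}{p+1}\binom{k-1}{p-1}\binom{n-1}{q-1}\binom{k+1}{q+1}$, so the upper arguments shift as well. The identity $\binom{n+1}{s+2}=\binom{n+1}{2}\binom{n-1}{s}/\binom{s+2}{2}$ then shows that both off-diagonal series are positive multiples of the \emph{same} polynomial $G$. Your $g,h$ are not proportional. For $n=1$, $k=2$ they are, up to constants, $x(2+x)$ and $x$, which would create a $u^2v$ term, whereas $\Layer_{1,2}=1+2u+2v+uv$.

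Your fallback is the right kind of argument, but it does not close as written. You expect the two Nevanlinna ratios to have opposite signs, so you end up needing a \emph{magnitude control} you never supply. With opposite signs the product of the ratios can have argument $0$, so arguments alone decide nothing. With the true structure the signs agree and no estimate is needed:
\begin{itemize}
\item $F$ and $G$ strictly interlace, via \eqref{4.3}--\eqref{4.4}, the Driver--Jordaan--Mbuyi interlacing of $P_m^{(0,\beta)}$ and $P_{m-1}^{(2,\beta)}$, the M\"obius map, and the extra zero $0$ of $G$.
\item Positive residues (Lemma~\ref{lem:residues}) then make $F/G$ send $\HH$ into the lower half-plane.
\item Finally, $\frac{F(u)}{G(u)}\frac{F(v)}{G(v)}=C>0$ is impossible, because the left side has argument in $(-2\pi,0)$.
\end{itemize}
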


The diagonal identity
\(\Layer_{n,k}(x,x)=\Ant_{[2]\times[n]\times[k]}(x)\) now gives
Conjecture~4.3.  The strict interlacing input also yields the sharper
univariate statement below.

\begin{theorem}\label{thm:simple-negative}
For all positive integers \(n,k\), every zero of
\(\Ant_{[2]\times[n]\times[k]}(x)\) is simple and strictly negative.
Consequently its coefficient sequence, extended by zeros, is a
P\'olya-frequency sequence of order infinity; the finite coefficient sequence
is strictly log-concave and unimodal.
\end{theorem}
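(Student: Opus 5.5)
The plan is to deduce Theorem~\ref{thm:simple-negative} from the determinantal structure behind Theorem~\ref{thm:main-stability}, and to separate three claims: real-rootedness, simplicity of the zeros, and their negativity. Real-rootedness is immediate. By Theorem~\ref{thm:main-stability} the polynomial \(\Layer_{n,k}(u,v)\) is real stable. Specializing a real stable polynomial to \(u=v=x\) preserves real stability, and a real stable univariate polynomial with real coefficients is real-rooted. Hence \(\Ant_{[2]\times[n]\times[k]}(x)=\Layer_{n,k}(x,x)\) has only real zeros.

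Negativity is also elementary. All coefficients of \(\Ant\) are nonnegative, the constant term is \(1\) (the empty antichain), and the linear coefficient is \(2nk>0\). So \(\Ant(x)>0\) for \(x\ge 0\), and every real zero is strictly negative.

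The substantive step is simplicity, and for this I will use the explicit form of \(\Layer_{n,k}\). Antichains of a layer \([n]\times[k]\) correspond to lattice paths, via the boundaries of the order ideals they generate. An antichain of \([2]\times[n]\times[k]\) is then a pair of such antichains whose ideals are nested in the appropriate strict sense. The first-crossing reflection of Krattenthaler--Sulanke converts this nested-pair count into a \(2\times 2\) Lindstr\"om--Gessel--Viennot type determinant:
\[
  \Layer_{n,k}(u,v)=f(u)g(v)-h(u)\ell(v).
\]
Here \(f,g,h,\ell\) are Narayana-type path polynomials. After the substitution \(x\mapsto (1+x)/(1-x)\) (or its analogue), they become Jacobi polynomials with neighbouring parameters. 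The classical interlacing of Jacobi polynomials with shifted parameters then shows that \(f,h\) have simple negative zeros that strictly interlace, and likewise for \(g,\ell\). I expect this to be exactly the input used to prove Theorem~\ref{thm:main-stability}, so I will take it as available.

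On the diagonal I want to show that \(P(x)=f(x)g(x)-h(x)\ell(x)\) has \(\deg P\) distinct zeros. The approach is a sign-alternation count. Evaluate \(P\) at the zeros \(\alpha_1<\dots<\alpha_r\) of \(f\), where \(P(\alpha_i)=-h(\alpha_i)\ell(\alpha_i)\). By strict interlacing, \(h(\alpha_i)\) alternates strictly in sign. For \(\ell(\alpha_i)\), I would compare with the zeros of \(g\) and \(\ell\) using the monotonicity of the ratios \(h/f\) and \(\ell/g\) on each interval between consecutive poles. Their product \((h/f)(\ell/g)\) should be strictly monotone on each such interval, so the equation \(P=0\), which is \((h/f)(\ell/g)=1\) away from the zeros of \(fg\), has exactly one solution in each interval. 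Together with the behaviour at \(-\infty\) and at \(0\), this yields \(\deg P\) distinct real zeros.

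The main obstacle is this last step: the four zero sets must be arranged so that the monotonicity is strict and no interval receives two roots. If the direct ratio argument is too delicate, I would instead perturb to \(P_\varepsilon(x)=\Layer_{n,k}(x,x+\varepsilon)\) and use strict interlacing to rule out collisions. The fallback is an Obreschkoff-type argument: \(P\) is a real-rooted combination whose derivative-type partner \(\partial_u\Layer-\partial_v\Layer\) on the diagonal strictly interlaces it, which forces simple zeros.

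Finally, the consequences are standard. A polynomial with positive coefficients and only real zeros has a coefficient sequence that is P\'olya frequency of order infinity, by the Aissen--Schoenberg--Whitney--Edrei theorem. Newton's inequalities give \(a_j^2\ge a_{j-1}a_{j+1}(1+1/j)(1+1/(d-j))\), which is strict log-concavity since all \(a_j>0\) up to the degree. Unimodality follows because the coefficients are positive and log-concave, with no internal zeros.
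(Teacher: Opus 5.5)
Your real-rootedness step (diagonalizing the real stable $\Layer_{n,k}$), your negativity step, and your Newton and Aissen--Schoenberg--Whitney consequences are fine. The gap is in simplicity, which you correctly flag as the substantive step: the argument you give for it fails.

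\textbf{The monotonicity claim is false.} By Proposition~\ref{prop:closed-form}, $\Layer_{n,k}(u,v)=F(u)F(v)-C\,G(u)G(v)$ with $F=F_{n,k}$ and $G=xT_{n,k}$. So on the diagonal $fg=F^2$ and $h\ell=C\,G^2$, and your product of ratios is $C(G/F)^2$. On each bounded interval between consecutive zeros of $F$, this function tends to $+\infty$ at both ends, because $G$ does not vanish at zeros of $F$. It also vanishes at the zero of $G$ that the interlacing~\eqref{4.8} places inside the interval. So it is not monotone, and $C(G/F)^2=1$ has at least two solutions there, not one.

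\textbf{The sign test and the fallbacks also fail.} Your sign test at the zeros of $f$ gives nothing: $P(\rho_i)=-C\,G(\rho_i)^2<0$ at every zero $\rho_i$ of $F$. The derivative fallback fails because $\Layer_{n,k}$ is symmetric in $u,v$, so $\partial_u\Layer-\partial_v\Layer$ vanishes identically on the diagonal. The perturbation $\Layer_{n,k}(x,x+\varepsilon)$ comes with no argument, and simplicity is not preserved in the limit $\varepsilon\to0$. Real stability by itself never yields simple zeros: $(u+1)(v+1)$ is real stable, yet its diagonal is $(x+1)^2$.

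\textbf{The missing idea} is that the diagonal is a difference of squares: $\Ant_{[2]\times[n]\times[k]}=(F+\sqrt C\,G)(F-\sqrt C\,G)$. The paper treats each factor as follows.
\begin{itemize}
\item Its zeros are the solutions of $F/G=\mp\sqrt C$.
\item $F/G$ has positive residues (Lemma~\ref{lem:residues}), so it is strictly decreasing between its poles. Hence each target value is taken exactly once on every bounded pole interval.
\item The outer intervals are controlled by the limit $c\geq\sqrt C$ at infinity.
\item Every root $q$ is simple because $Q_\pm'(q)=G(q)\,(F/G)'(q)\neq0$.
\item The two factors share no root, since a common root would be a common zero of $F$ and $G$.
\end{itemize}

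A repair closer to your sign-alternation idea also works. Evaluate $F^2-C\,G^2$ at all the interlaced points $\rho_1<r_1<\cdots<\rho_m<r_m=0$. At the $\rho_i$ it equals $-C\,G(\rho_i)^2<0$, and at the $r_j$ it equals $F(r_j)^2>0$. This gives $2m-1$ distinct zeros in $(\rho_1,0)$.
\begin{itemize}
\item If $M>m$, the top coefficient $([x^m]F)^2-C([x^m]G)^2$ is positive, so there is one more zero in $(-\infty,\rho_1)$.
\item If $M=m$, that coefficient vanishes, so the degree is exactly $2m-1$.
\end{itemize}
In both cases the number of distinct real zeros equals the degree, so all zeros are simple.
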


\begin{theorem}\label{thm:gamma-intro}
For every positive integer \(m\), the polynomial
\(\Ant_{[2]\times[m]\times[m+1]}(x)\) has degree \(2m\), is
palindromic, has only simple strictly negative zeros, and has strictly
positive gamma coefficients.
\end{theorem}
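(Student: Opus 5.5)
We plan to take the simple strictly negative zeros from Theorem~\ref{thm:simple-negative} with \(n=m\), \(k=m+1\). The new content is then the degree count, palindromicity, and strict gamma-positivity.

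\emph{Degree.} We compute the width of \(P=[2]\times[m]\times[m+1]\). Partition \(P\) by the value of \((a,b)\in[2]\times[m]\) into \(2m\) chains in the third coordinate; this shows every antichain has at most \(2m\) elements. In the other direction, the level \(\{(a,b,c): a+b+c=m+2\}\) of this product of chains is an antichain. Counting pairs \((a,b)\), with \(c=m+2-a-b\in[m+1]\), gives exactly \(2m\) elements. Hence \(\deg \Ant_P=2m\), and the leading coefficient is positive.

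\emph{Palindromicity.} We must show \(x^{2m}\Ant_P(1/x)=\Ant_P(x)\). We intend to derive this from the explicit determinantal (Lindstr\"om--Gessel--Viennot) formula for \(\Layer_{m,m+1}(u,v)\) that the paper builds from the two-path first-crossing reflection. The antichains of \([2]\times[n]\times[k]\) correspond to pairs of lattice paths in the \(n\times k\) grid, with marked corners recording the layer elements. The count of antichains of size \(j\) is then a signed sum of products of binomial coefficients in \(j\), \(n\), \(k\). For \(k=n+1\) we expect a reciprocal identity of the form \(u^{m}v^{m}\Layer_{m,m+1}(1/v,1/u)=\Layer_{m,m+1}(u,v)\), or at least its diagonal version, to follow termwise. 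The mechanism would be the symmetry \(\binom{m}{i}\binom{m+1}{i}\leftrightarrow\binom{m}{m-i}\binom{m+1}{m+1-i}\) under \(j\mapsto 2m-j\), combined with swapping the two layers and transposing the grid. A bijective alternative is to pair antichains via complementation inside the maximal level structure. I would first check the identity coefficientwise on the explicit formula for \(m=1,2\), namely \(1+5x+x^2\)-type checks. Finding the correct reciprocal identity is the main obstacle: it is where the special shape \(k=n+1\) enters.

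\emph{Gamma positivity.} Write \(f=\Ant_P\), which is palindromic of degree \(2m\) and has simple negative zeros. The zeros are closed under \(r\mapsto 1/r\), and \(-1\) is a zero only to odd multiplicity in a degree-forced way. Since \(f(-1)\ne0\) would follow from even degree together with pairing, we argue as follows: if \(-1\) were a zero it would have to occur an even number of times, since the remaining zeros pair off and the degree is even. Simplicity then forbids this. So the zeros form \(m\) reciprocal pairs \(\{r_i,1/r_i\}\) with \(r_i<-1\), and
\[
 f(x)=c\prod_{i=1}^m\bigl(x^2+s_i x+1\bigr),\qquad s_i=-(r_i+1/r_i)>2,\ c>0.
\]
Each factor equals \((1+x)^2+(s_i-2)x\), with \(s_i-2>0\). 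Expanding the product in the basis \(x^j(1+x)^{2m-2j}\) gives
\[
 \gamma_j=c\,e_j(s_1-2,\ldots,s_m-2)>0
\]
for every \(0\le j\le m\), which is strict gamma-positivity.
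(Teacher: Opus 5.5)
\textbf{Gap: palindromicity is not proved.} This is the one place where the shape \(k=m+1\) matters. You state an expected identity and a mechanism, but the mechanism cannot work.

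The bivariate identity you guess, \(u^mv^m\Layer_{m,m+1}(1/v,1/u)=\Layer_{m,m+1}(u,v)\), is in fact true, but it does not hold termwise. By Proposition~\ref{prop:closed-form} and the \(u\leftrightarrow v\) symmetry of \(\Layer_{m,m+1}\), it is equivalent to \(f_pf_q-Cg_pg_q=f_{m-p}f_{m-q}-Cg_{m-p}g_{m-q}\) for \(0\le p,q\le m\). Here \(f_r=\binom mr\binom{m+1}r\), \(g_r=[x^r]\,xT_{m,m+1}(x)\) and \(C=\binom{m+1}2\binom{m+2}2\). Neither product is symmetric by itself, since \(f_{m-r}=\binom mr\binom{m+1}{r+1}\ne f_r\) in general; already \(f_0^2=1\) but \(f_m^2=(m+1)^2\). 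The binomial symmetry you cite, \(\binom m{m-i}\binom{m+1}{m+1-i}=\binom mi\binom{m+1}i\), is a tautology and never relates \(f_i\) to \(f_{m-i}\).

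What is missing is the cross-cancellation recorded in \eqref{6.4}. Put \(d^2=m(m+2)\). The relations \(f_{m-r}=(m+1)f_r-\tfrac{m+1}2d^2g_r\) and \(g_{m-r}=\tfrac2{m+1}f_r-(m+1)g_r\) define a linear map \((f_r,g_r)\mapsto(f_{m-r},g_{m-r})\). This map preserves the form \(f^2-Cg^2\) exactly because \((m+1)^2-d^2=1\).

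The paper packages this as the reciprocal factor identities \eqref{6.5}: \(x^mQ_\pm(1/x)=\rho^{\pm1}Q_\mp(x)\), with \(Q_\pm=F\pm\sqrt C\,G\) and \(\rho=m+1+d\). Multiplying them gives \eqref{6.7}. Your bivariate identity also follows, from \(\Layer_{m,m+1}(u,v)=\tfrac12\bigl(Q_+(u)Q_-(v)+Q_-(u)Q_+(v)\bigr)\). The complementation bijection you mention is only a slogan. Without such a computation, palindromicity is unproved, and so is the reciprocal pairing of zeros that your gamma argument relies on.

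\textbf{Smaller repairs.} The level \(a+b+c=m+2\) has only \(2m-1\) points, because for \(a=2\) the choice \(b=m\) forces \(c=0\). Use the middle level \(a+b+c=m+3\), which has exactly \(2m\) points. With that fix, your chain partition gives width, hence degree, \(2m\). This is a valid combinatorial alternative to the paper's use of the leading coefficients \(\rho^{\pm1}\) of \(Q_\pm\) from \eqref{6.6}. For the record, the \(m=1\) polynomial is \(1+4x+x^2\), not \(1+5x+x^2\).

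Your exclusion of the root \(-1\) is correct once the garbled first sentence is dropped. Negativity excludes \(+1\), and the other roots pair off as \(r,1/r\). So the multiplicity of \(-1\) is even, hence zero by simplicity. This replaces the paper's derivative argument.

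Finally, \(c=1\), since the leading and constant coefficients of a palindromic polynomial agree. From there your expansion coincides with the paper's \eqref{6.8}.
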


Permuting the factors shows that Theorem~\ref{thm:simple-negative} applies
to every three-dimensional box with a side of length \(2\); a side of
length \(1\) is covered by the Jacobi formula used below.  Thus a side of
length at most \(2\) is a sufficient condition.  Ding and Dong's
Example~3.6(a) is
\[
\Ant_{[3]\times[3]\times[3]}(x)
=1+27x+162x^2+350x^3+310x^4+114x^5+15x^6+x^7,
\]
which they note in Remark~4.4 is not real-rooted \cite{DingDong}.  Hence no
corresponding guarantee holds for all boxes whose three sides have length at
least \(3\); the example does not classify those boxes.

The proof begins with the ordinary-weight specialization of Krattenthaler and
Sulanke's two-path turn enumeration \cite{KrattenthalerSulanke}; we give the
reflection and its inverse explicitly.  The two univariate polynomials that
result are related to Jacobi polynomials, and the theorem of Driver, Jordaan,
and Mbuyi \cite{DriverJordaanMbuyi} places their zeros in strict alternation.
Finally, a positive-residue partial fraction expansion proves stability by
mapping the upper half-plane to the lower half-plane.

\section{Fixed-layer enumeration}

We first count antichains having prescribed cardinalities in the two
layers.  The details of the reflection are included both to fix the weak
inequalities in the product order and to cover all boundary cases.

Write the two parts of an antichain candidate as
\[
 A=\{(1,a_i,j_i):1\leq i\leq p\},\qquad
 B=\{(2,b_t,\ell_t):1\leq t\leq q\},
\]
where
\[
 a_1<\cdots<a_p,\quad j_1>\cdots>j_p,
 \qquad
 b_1<\cdots<b_q,\quad \ell_1>\cdots>\ell_q.
\]
Put \(c_i=k+1-j_i\) and \(d_t=k+1-\ell_t\).  Both rows in each of
\[
 \begin{pmatrix}a_1&\cdots&a_p\\c_1&\cdots&c_p\end{pmatrix},
 \qquad
 \begin{pmatrix}b_1&\cdots&b_q\\d_1&\cdots&d_q\end{pmatrix}
\]
are strictly increasing, and conversely every such pair of arrays encodes
two antichains in the grid layers.  The only possible cross-layer
comparison is
\[
 (1,a_i,j_i)\leq(2,b_t,\ell_t)
 \quad\Longleftrightarrow\quad
 a_i\leq b_t\ \text{ and }\ c_i\geq d_t.                 \tag{2.1}\label{2.1}
\]
Thus the union is an antichain exactly when no pair \((i,t)\) satisfies
the two inequalities in \eqref{2.1}.

Throughout the paper,
\[
 \binom ab=0\qquad\text{if }b<0\text{ or }b>a,
\]
where all upper arguments under consideration are nonnegative integers.
For a finite set \(E\), the notation \(\binom{E}{r}\) denotes its set of
\(r\)-element subsets.

\begin{theorem}[Fixed-layer determinant]\label{thm:determinant}
For nonnegative integers \(p,q\), let \(C_{p,q}(n,k)\) be the number of
antichains having \(p\) elements in \(L_1\) and \(q\) elements in \(L_2\).
Then
\[
 C_{p,q}(n,k)=
 \det\begin{pmatrix}
 \binom np\binom kp&
 \binom{n+1}{p+1}\binom{k-1}{p-1}\\[2mm]
 \binom{n-1}{q-1}\binom{k+1}{q+1}&
 \binom nq\binom kq
 \end{pmatrix}.                                         \tag{2.2}\label{2.2}
\]
\end{theorem}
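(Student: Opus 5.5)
The plan is to read \eqref{2.2} as ``all pairs minus bad pairs'' and to count the bad pairs by a sign-free first-crossing reflection, in the style of Lindstr\"om--Gessel--Viennot, keeping track of turns as in Krattenthaler--Sulanke. The diagonal product \(\binom np\binom kp\binom nq\binom kq\) counts all pairs \((A,B)\) of layer antichains with \(|A|=p\) and \(|B|=q\). Indeed, by the encoding above, \(A\) is exactly a choice of a \(p\)-subset \(\{a_i\}\subseteq[n]\) and a \(p\)-subset \(\{c_i\}\subseteq[k]\), and similarly for \(B\). So it suffices to build a bijection \(\Phi\) between
\begin{itemize}
\item the \emph{bad} pairs, those for which some \((i,t)\) satisfies both inequalities of \eqref{2.1}, and
\item quadruples \((a',c',b',d')\) with \(a'\in\binom{[n+1]}{p+1}\), \(c'\in\binom{[k-1]}{p-1}\), \(b'\in\binom{[n-1]}{q-1}\), \(d'\in\binom{[k+1]}{q+1}\).
\end{itemize}

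To make a reflection available, I will encode each array as a lattice path. The pair \((\{a_i\},\{c_i\})\) corresponds to the unique lattice path in an \(n\times k\) box (the staircase boundary of the order ideal generated by \(A\)) whose \(p\) ``valleys'' (say, east-then-north corners) sit at the points \((a_i,c_i)\). This is the standard fact that paths with \(p\) valleys of a fixed type are counted by \(\binom np\binom kp\). For \(B\) I use the opposite corner type, so that its \(q\) peaks are at \((b_t,d_t)\), with the box shifted by one unit. The shift is chosen so that \eqref{2.1} becomes the geometric statement that the two paths touch or cross. The weak inequalities \(a_i\le b_t\) and \(c_i\ge d_t\) fix exactly which shift is needed, and I would check this first on small cases.

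With this encoding, \(\Phi\) is defined by locating the first lattice point (in a fixed order along the paths) shared by the two paths and swapping the tails after it. This exchanges endpoints. Path 1 then runs in an \((n+1)\times(k-1)\) box and path 2 in an \((n-1)\times(k+1)\) box, matching the shifts in the off-diagonal binomials.

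The crucial bookkeeping is the turn count. At the first common point, the corner structure of the two incoming and two outgoing steps must be analysed case by case. Because one path is counted by valleys and the other by peaks, the swap turns one valley of path 1 into a peak-type contribution and vice versa. The aim is to show that the swapped path 1 has exactly \(p+1\) marked corners in one coordinate and \(p-1\) in the other, and that path 2 has \(q-1\) and \(q+1\). This is Krattenthaler--Sulanke's turn enumeration specialised to ordinary weights.

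For the inverse, I will show that every quadruple on the right-hand side gives two paths that must meet, since their endpoints are interchanged. Swapping back at the first common point then recovers a bad pair, and the first meeting point is preserved by the swap, so the two maps are mutually inverse.

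I expect the main obstacle to be this local corner analysis at the crossing point. There are four step-pattern cases there, and in each the corner counts must shift by exactly \(\pm1\) in the prescribed coordinates. The degenerate cases also need separate checks:
\begin{itemize}
\item when \(p=0\) or \(q=0\), \eqref{2.2} should reduce to the plain product, since \(\binom{k-1}{-1}=0\) or \(\binom{n-1}{-1}=0\);
\item when the crossing happens on the boundary of the box;
\item when \(p>\min(n,k)\), where both sides vanish.
\end{itemize}
I would verify the convention \(\binom ab=0\) against each of these separately before asserting the bijection.
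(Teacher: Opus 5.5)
There is a genuine gap, and it lies in the plan itself: swapping lattice-path tails cannot produce the off-diagonal term of \eqref{2.2}.

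\textbf{The target objects are not lattice paths.} A quadruple in \eqref{2.3} is a pair of two-rowed arrays. One has rows of lengths \(p+1\) and \(p-1\), the other rows of lengths \(q-1\) and \(q+1\). On a single lattice path, corners of one type give equally many \(x\)- and \(y\)-coordinates. Valleys and peaks alternate, so their numbers differ by at most one. Hence no corner statistic of a path gives rows whose lengths differ by \(2\). Your box bookkeeping also fails numerically. For \(n=3\), \(k=1\), \(p=1\), the entry \(\binom{n+1}{p+1}\binom{k-1}{p-1}=\binom42\binom00=6\), yet the \(4\times 0\) box you assign to the new first path contains exactly one path. In general, summing that entry over \(p\) gives \(\binom{n+k}{k+1}\), not the \(\binom{n+k}{n+1}\) paths in an \((n+1)\times(k-1)\) box.

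\textbf{Tail swapping destroys the information you need.} The new first path is the prefix of path 1 up to the meeting point \(v\), followed by the suffix of path 2. Take two bad pairs with the same path 2, whose first paths agree up to \(v\) but have different numbers of valleys after \(v\). Both have the same first meeting point, so they are sent to pairs with the same new first path. No statistic of the new first path alone can therefore recover \(p\), contrary to your claim that it carries \(p+1\) marks in one coordinate and \(p-1\) in the other. The swap also mixes types: a valley-weighted prefix is glued to a peak-weighted suffix. No case analysis of the four step patterns at \(v\) can repair this, because the defect is in what the exchanged suffixes carry, not in the corner at \(v\).

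\textbf{The missing idea.} Work with the turn arrays directly, as in Krattenthaler--Sulanke, so that each row stays with its own array and only one entry migrates.
\begin{enumerate}
\item Take \(I,J\) as the separate minima in \eqref{2.6} and prove the key fact that \((I,J)\) is itself a witness of \eqref{2.5}.
\item Set \(\alpha=u_J\) and \(\delta=y_I\). Minimality forces \(x_h<\alpha\), \(y_h<\delta\) for \(h<I\), and \(u_s<\alpha\), \(v_s<\delta\) for \(s<J\).
\item Reflect the initial blocks, \(x_{I-t}\mapsto\alpha-x_{I-t}\) and similarly for the other three rows.
\item Move \(\alpha\) from \(U\) into \(X\) and \(\delta\) from \(Y\) into \(V\), and shift the tails by \(\pm1\) as in \eqref{2.9}.
\end{enumerate}
This is what yields rows of lengths \(p+1,p-1,q-1,q+1\) over the alphabets \([n+1],[k-1],[n-1],[k+1]\).

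\textbf{The inverse.} Your inverse step, that the paths ``must meet since the endpoints are interchanged'', also has no meaning here, since the targets are not paths. What the inverse actually needs is the following. Append the sentinels \(y^-_p=k\) and \(u^-_q=n\). Observe that the \(p\)-th entry of a \((p+1)\)-subset of \([n+1]\) is at most \(n\), and symmetrically for \(V^+\); this guarantees an inverse witness exists. Finally, check that the same \((I,J)\) is recovered in both directions.
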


\begin{proof}
The product of the diagonal entries counts all independently chosen pairs
of grid antichains.  We construct a bijection from the bad pairs, those
for which \eqref{2.1} holds at least once, to
\[
 \binom{[n+1]}{p+1}\times\binom{[k-1]}{p-1}
 \times\binom{[n-1]}{q-1}\times\binom{[k+1]}{q+1}.       \tag{2.3}\label{2.3}
\]
This is the ordinary-counting specialization, with the weight parameter
set to \(1\), of the two-path construction in
\cite[Eq.~(1), equivalently Eq.~(12), and Sec.~4]
{KrattenthalerSulanke}.  We spell out the specialization.

Complement and reverse the rows:
\[
\begin{aligned}
 X&=(n+1-a_p<\cdots<n+1-a_1),\\
 Y&=(k+1-c_p<\cdots<k+1-c_1),\\
 U&=(n+1-b_q<\cdots<n+1-b_1),\\
 V&=(k+1-d_q<\cdots<k+1-d_1).
\end{aligned}                                            \tag{2.4}\label{2.4}
\]
Write their entries as \(x_h,y_h,u_s,v_s\), respectively.  The rows
\(X,Y\) have length \(p\), the rows \(U,V\) have length \(q\), and the
alphabets are \([n],[k],[n],[k]\).  Under \eqref{2.4}, condition
\eqref{2.1} becomes
\[
 x_h\geq u_s\quad\text{and}\quad y_h\leq v_s             \tag{2.5}\label{2.5}
\]
for some \((h,s)\), the intersection criterion in the turn-array model.

Assume first that \(p,q\geq1\) and the pair is bad.  Select the
coordinatewise first witness
\[
\begin{split}
 I&=\min\{h:\eqref{2.5}\text{ holds for some }s\},\\
 J&=\min\{s:\eqref{2.5}\text{ holds for some }h\}.
\end{split}                                               \tag{2.6}\label{2.6}
\]
The pair \((I,J)\) is itself a witness.  Indeed, witnesses \((I,s_0)\)
and \((h_0,J)\), together with \(J\leq s_0\), \(I\leq h_0\), give
\[
 x_I\geq u_{s_0}\geq u_J,\qquad
 y_I\leq y_{h_0}\leq v_J.
\]
Set
\[
 \alpha=u_J,\qquad \delta=y_I.                           \tag{2.7}\label{2.7}
\]
If \(h<I\), then \(y_h<\delta\leq v_J\), so minimality of \(I\)
forces \(x_h<\alpha\).  Similarly, if \(s<J\), then
\(u_s<\alpha\leq x_I\), so minimality of \(J\) forces
\(v_s<\delta\).

For the initial blocks define
\[
\begin{array}{ll}
 \widehat x_t=\alpha-x_{I-t},&
 \widehat y_t=\delta-y_{I-t}\quad(1\leq t<I),\\
 \widehat u_t=\alpha-u_{J-t},&
 \widehat v_t=\delta-v_{J-t}\quad(1\leq t<J).
\end{array}                                               \tag{2.8}\label{2.8}
\]
Reflect these blocks, move the crossing coordinates into the longer rows,
and shift the tails by one:
\[
\begin{aligned}
 X^+&=(\widehat x_1,\ldots,\widehat x_{I-1},
          \alpha,x_I+1,\ldots,x_p+1),\\
 Y^-&=(\widehat y_1,\ldots,\widehat y_{I-1},
          y_{I+1}-1,\ldots,y_p-1),\\
 U^-&=(\widehat u_1,\ldots,\widehat u_{J-1},
          u_{J+1}-1,\ldots,u_q-1),\\
 V^+&=(\widehat v_1,\ldots,\widehat v_{J-1},
          \delta,v_J+1,\ldots,v_q+1).
\end{aligned}                                            \tag{2.9}\label{2.9}
\]
Empty blocks are omitted.  Each reflected block is positive and strictly
increasing.  At the splices, whenever the indicated entries exist,
\[
\begin{gathered}
 \widehat x_{I-1}<\alpha<x_I+1,
 \qquad
 \widehat y_{I-1}\leq\delta-1<y_{I+1}-1,\\
 \widehat u_{J-1}\leq\alpha-1<u_{J+1}-1,
 \qquad
 \widehat v_{J-1}<\delta<v_J+1.
\end{gathered}                                            \tag{2.10}\label{2.10}
\]
Because \(\delta=y_I\leq y_p\) and \(\alpha=u_J\leq u_q\), the endpoint
bounds are explicitly
\[
\begin{aligned}
 \max X^+&=x_p+1\leq n+1,\\
 \max Y^-&\leq\max\{\delta-y_1,y_p-1\}\leq k-1
   &&\text{if }Y^-\ne\varnothing,\\
 \max U^-&\leq\max\{\alpha-u_1,u_q-1\}\leq n-1
   &&\text{if }U^-\ne\varnothing,\\
 \max V^+&=v_q+1\leq k+1.
\end{aligned}
\]
No maximum is taken for an empty row.  The rows
\(X^+,Y^-,U^-,V^+\) have respective lengths
\(p+1,p-1,q-1,q+1\); together with positivity, the displayed bounds place
them in the respective alphabets \([n+1],[k-1],[n-1],[k+1]\).  Thus
\eqref{2.9} belongs to \eqref{2.3}.

We now give the inverse.  Start with a quadruple
\((X^+,Y^-,U^-,V^+)\) in \eqref{2.3}, append the forced sentinels
\[
 y^-_p=k,\qquad u^-_q=n,                                  \tag{2.11}\label{2.11}
\]
and search only over \(1\leq r\leq p\), \(1\leq s\leq q\).  Define
\[
\begin{split}
 I&=\min\{r:x^+_r\leq u^-_s\text{ and }y^-_r\geq v^+_s
               \text{ for some }s\},\\
 J&=\min\{s:x^+_r\leq u^-_s\text{ and }y^-_r\geq v^+_s
               \text{ for some }r\}.
\end{split}                                               \tag{2.12}\label{2.12}
\]
These sets are nonempty: the \(p\)-th entry of a \((p+1)\)-subset of
\([n+1]\) is at most \(n=u^-_q\), while the \(q\)-th entry of a
\((q+1)\)-subset of \([k+1]\) is at most \(k=y^-_p\).  The monotonicity
argument used after \eqref{2.6}, with both inequalities reversed, shows
that \((I,J)\) itself satisfies the two inequalities in \eqref{2.12}.
Put
\[
 \alpha'=x^+_I,\qquad \delta'=v^+_J,                      \tag{2.13}\label{2.13}
\]
and, for the relevant ranges, set
\[
\begin{array}{ll}
 \bar x_t=\alpha'-x^+_{I-t},&
 \bar y_t=\delta'-y^-_{I-t}\quad(1\leq t<I),\\
 \bar u_t=\alpha'-u^-_{J-t},&
 \bar v_t=\delta'-v^+_{J-t}\quad(1\leq t<J).
\end{array}                                               \tag{2.14}\label{2.14}
\]
Reconstruct
\[
\begin{aligned}
 X&=(\bar x_1,\ldots,\bar x_{I-1},
        x^+_{I+1}-1,\ldots,x^+_{p+1}-1),\\
 Y&=(\bar y_1,\ldots,\bar y_{I-1},\delta',
        y^-_I+1,\ldots,y^-_{p-1}+1),\\
 U&=(\bar u_1,\ldots,\bar u_{J-1},\alpha',
        u^-_J+1,\ldots,u^-_{q-1}+1),\\
 V&=(\bar v_1,\ldots,\bar v_{J-1},
        v^+_{J+1}-1,\ldots,v^+_{q+1}-1).
\end{aligned}                                            \tag{2.15}\label{2.15}
\]
For \(r<I\), failure of \((r,J)\) in \eqref{2.12}, together with
\(x^+_r<\alpha'\leq u^-_J\), gives \(y^-_r<\delta'\).  For \(s<J\),
failure of \((I,s)\), together with
\(v^+_s<\delta'\leq y^-_I\), gives \(u^-_s<\alpha'\).  Hence the
reflected blocks in \eqref{2.15} are positive and strictly increasing,
the splices are strict, and the witness bounds
\(\alpha'\leq n\), \(\delta'\leq k\) put all four rows in their original
alphabets.

It remains only to check that the same first crossing is recovered.  For
a quadruple obtained from \eqref{2.9}, the pair \((I,J)\) satisfies
\eqref{2.12}, using
a sentinel if \(I=p\) or \(J=q\).  If \(h<I,s\geq J\), then
\(y^-_h<\delta\leq v^+_s\); if \(h\geq I,s<J\), then
\(x^+_h\geq\alpha>u^-_s\).  In the remaining region \(h<I,s<J\), an
inverse witness is, by \eqref{2.8}, equivalent to the original witness
\((I-h,J-s)\), contrary to \eqref{2.6}.  Thus the inverse chooses the
same indices, and \eqref{2.14}--\eqref{2.15} cancel
\eqref{2.8}--\eqref{2.9} term by term.

Conversely, the quadruple reconstructed in \eqref{2.15} satisfies
\[
 x_I=x^+_{I+1}-1\geq\alpha'=u_J,\qquad
 y_I=\delta'\leq v^+_{J+1}-1=v_J,
\]
so \((I,J)\) is a forward witness.  There is no forward witness with
\(h<I,s\geq J\), because then \(x_h<u_J\leq u_s\), and none with
\(h\geq I,s<J\), because then \(y_h>v_s\).  In the remaining region
\(h<I,s<J\), a forward witness would be equivalent through
\eqref{2.14} to an earlier inverse witness.  Therefore the two maps are
mutually inverse.  The number of bad pairs is the cardinality of
\eqref{2.3}, which is precisely the product of the two off-diagonal
entries in \eqref{2.2}.

If \(p=0\) or \(q=0\), no bad pair exists and the off-diagonal product
vanishes because it contains, respectively,
\(\binom{k-1}{-1}\) or \(\binom{n-1}{-1}\).  Empty reflected rows are
allowed, and \([0]\) has one empty subset.  Our binomial convention also
covers \(n=1\), \(k=1\), \(p>\min(n,k)\), and \(q>\min(n,k)\).  Finally,
if a target set in \eqref{2.3} is empty, the forward map shows that no bad
input pair exists.  This proves all boundary cases and the determinant.
\end{proof}

\section{Layer-refined closed form}

Define the symmetric polynomials \(F_{n,k}\) and \(T_{n,k}\), respectively,
by
\[
 F_{n,k}(x)=\sum_{r\geq0}\binom nr\binom kr x^r
 ={}_2F_1(-n,-k;1;x)                                     \tag{3.1}\label{3.1}
\]
\[
 T_{n,k}(x)=\sum_{s\geq0}
 \frac{\binom{n-1}s\binom{k-1}s}{\binom{s+2}2}x^s
 ={}_2F_1(1-n,1-k;3;x).                                  \tag{3.2}\label{3.2}
\]
The coefficient identity behind the second equality in \eqref{3.2} is
\[
 \frac{(1-n)_s(1-k)_s}{(3)_s s!}
 =\frac{\binom{n-1}s\binom{k-1}s}{\binom{s+2}2}.
\]

For \(s\geq0\), with both sides zero outside their natural ranges,
\[
 \binom{n+1}{s+2}
 =\binom{n+1}{2}\frac{\binom{n-1}s}{\binom{s+2}2}.        \tag{3.3}\label{3.3}
\]
Indeed, both sides equal
\((n+1)!/((s+2)!(n-1-s)!)\).  Taking \(s=p-1\) and then exchanging
\(n,k\) gives
\[
\begin{aligned}
 \sum_p\binom{n+1}{p+1}\binom{k-1}{p-1}u^p
 &=\binom{n+1}{2}uT_{n,k}(u),\\
 \sum_q\binom{n-1}{q-1}\binom{k+1}{q+1}v^q
 &=\binom{k+1}{2}vT_{n,k}(v).
\end{aligned}                                             \tag{3.4}\label{3.4}
\]

\begin{proposition}\label{prop:closed-form}
For positive integers \(n,k\),
\[
\boxed{\begin{aligned}
 \Layer_{n,k}(u,v)
 &=F_{n,k}(u)F_{n,k}(v)-CuvT_{n,k}(u)T_{n,k}(v),\\
 C&=\binom{n+1}{2}\binom{k+1}{2}.
\end{aligned}}                                            \tag{3.5}\label{3.5}
\]
Consequently,
\[
 \Ant_{[2]\times[n]\times[k]}(x)
 =F_{n,k}(x)^2-Cx^2T_{n,k}(x)^2.                         \tag{3.6}\label{3.6}
\]
\end{proposition}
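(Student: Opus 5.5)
The plan is to sum the fixed-layer determinant of Theorem~\ref{thm:determinant} against the generating weights. By definition,
\[
 \Layer_{n,k}(u,v)=\sum_{p,q\geq0}C_{p,q}(n,k)\,u^pv^q ,
\]
and \eqref{2.2} writes each \(C_{p,q}(n,k)\) as a difference of two products. The diagonal product factors as a \(p\)-part times a \(q\)-part, and so does the off-diagonal product. So the double sum splits into two separable sums:
\[
 \Layer_{n,k}(u,v)=\Bigl(\sum_p\tbinom np\tbinom kp u^p\Bigr)\Bigl(\sum_q\tbinom nq\tbinom kq v^q\Bigr)
 -\Bigl(\sum_p\tbinom{n+1}{p+1}\tbinom{k-1}{p-1}u^p\Bigr)\Bigl(\sum_q\tbinom{n-1}{q-1}\tbinom{k+1}{q+1}v^q\Bigr).
\]
All sums are finite, since the binomial convention kills the terms outside the natural ranges. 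Theorem~\ref{thm:determinant} covers all \(p,q\geq0\), including the boundary cases, so no correction terms should arise.

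Next I identify the four factors. The first two are \(F_{n,k}(u)\) and \(F_{n,k}(v)\) by \eqref{3.1}. The off-diagonal factors are exactly the two lines of \eqref{3.4}, namely \(\binom{n+1}{2}uT_{n,k}(u)\) and \(\binom{k+1}{2}vT_{n,k}(v)\). Multiplying these gives \(CuvT_{n,k}(u)T_{n,k}(v)\), which is \eqref{3.5}.

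Before relying on \eqref{3.4}, I would check the index shifts in it. For the first line, put \(s=p-1\). Then \(\binom{n+1}{p+1}=\binom{n+1}{s+2}\), which equals \(\binom{n+1}{2}\binom{n-1}{s}/\binom{s+2}{2}\) by \eqref{3.3}, and \(\binom{k-1}{p-1}=\binom{k-1}{s}\). The term \(p=0\) vanishes, and this leaves \(u\,T_{n,k}(u)\). The second line follows by the symmetric computation with \(n\) and \(k\) exchanged, using that \(T_{n,k}\) is symmetric in \(n,k\).

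Finally, \eqref{3.6} is the specialization \(u=v=x\) together with the identity \(\Layer_{n,k}(x,x)=\Ant_{[2]\times[n]\times[k]}(x)\). That identity holds because every antichain splits uniquely into its intersections with the two layers, and \(|S|=|S\cap L_1|+|S\cap L_2|\). Once the determinant is available there is no real obstacle. The only place needing care is the bookkeeping in \eqref{3.4}, where the vanishing conventions must make the \(p=0\) and \(q=0\) terms drop out correctly.
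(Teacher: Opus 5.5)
Your proposal is correct and follows essentially the same route as the paper. Both sum the determinant \eqref{2.2} against \(u^pv^q\), split the result into the product of the diagonal generating functions minus the product of the off-diagonal ones, identify these via \eqref{3.1} and \eqref{3.4}, and then set \(u=v=x\). Your explicit check of the index shift behind \eqref{3.4} and of the vanishing \(p=0\), \(q=0\) terms is bookkeeping that the paper leaves implicit.
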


\begin{proof}
Multiply the two diagonal generating functions in the determinant of
Theorem~\ref{thm:determinant}, subtract the product of its two
off-diagonal generating functions, and use \eqref{3.4}.  This gives
\eqref{3.5}.  Setting \(u=v=x\) yields both the combinatorial diagonal
identity
\(\Layer_{n,k}(x,x)=\Ant_{[2]\times[n]\times[k]}(x)\) and
formula \eqref{3.6}.
\end{proof}

\section{Jacobi interlacing and real stability}

Put
\[
 m=\min(n,k),\qquad M=\max(n,k),\qquad
 \beta=M-m,\qquad z=\frac{1+x}{1-x}.                    \tag{4.1}\label{4.1}
\]
Since \(F_{n,k}\) and \(T_{n,k}\) are symmetric in \(n,k\), we may take
\(n=m\) and \(k=M\) in the derivation.
We use the normalization
\[
 P_r^{(\alpha,\beta)}(z)
 =\frac{(\alpha+1)_r}{r!}
 {}_2F_1\!\left(-r,r+\alpha+\beta+1;
 \alpha+1;\frac{1-z}{2}\right)                         \tag{4.2}\label{4.2}
\]
for Jacobi polynomials.  Pfaff's transformation and \eqref{4.2}, in the
forms recorded in \cite[Secs.~15.8 and 18.5]{DLMF}, give
\[
 F_{n,k}(x)=(1-x)^mP_m^{(0,\beta)}(z)                    \tag{4.3}\label{4.3}
\]
and
\[
 T_{n,k}(x)=\frac{2}{m(m+1)}(1-x)^{m-1}
 P_{m-1}^{(2,\beta)}(z).                                \tag{4.4}\label{4.4}
\]
For example, \eqref{4.3} follows by applying
\[
 {}_2F_1(a,b;c;x)
 =(1-x)^{-a}{}_2F_1\!\left(a,c-b;c;\frac{x}{x-1}\right)
\]
with \((a,b,c)=(-m,-M,1)\); equation \eqref{4.4} uses
\((a,b,c)=(1-m,1-M,3)\) and
\((3)_{m-1}/(m-1)!=m(m+1)/2\).

The variable \(z\) is defined only for \(x\ne1\), so the preceding
derivation initially proves \eqref{4.3}--\eqref{4.4} only there.  For every
Jacobi polynomial \(P_r\), however,
\((1-x)^rP_r((1+x)/(1-x))\) is a polynomial in \(x\).  Since the formulas
hold for \(x\ne1\), they hold identically, and hence also at \(x=1\).

We use the Jacobi interlacing theorem of Driver, Jordaan, and Mbuyi
\cite[Theorem~2.3]{DriverJordaanMbuyi}.  For \(\alpha,\beta>-1\) and
\(0\leq t,s\leq2\), it strictly interlaces the zeros of
\(P_r^{(\alpha,\beta)}\) with those of
\(P_{r-1}^{(\alpha+t,\beta+s)}\).  In our case, take
\[
 r=m,\qquad (\alpha,\beta)=(0,M-m),\qquad (t,s)=(2,0).
\]
For \(m\geq2\), let
\(\xi_1<\cdots<\xi_m\) and \(\eta_1<\cdots<\eta_{m-1}\) denote the
zeros of \(P_m^{(0,\beta)}\) and \(P_{m-1}^{(2,\beta)}\), respectively.
Since \(\beta=M-m\geq0\), all hypotheses are satisfied, and
\[
 -1<\xi_1<\eta_1<\xi_2<\cdots<\eta_{m-1}<\xi_m<1.        \tag{4.5}\label{4.5}
\]

Set
\[
 G(x)=xT_{n,k}(x).
\]
Since \(x/(1-x)=(z-1)/2\), equation \eqref{4.4} also gives
\[
 G(x)=\frac{(1-x)^m}{m(m+1)}
      (z-1)P_{m-1}^{(2,\beta)}(z).                       \tag{4.6}\label{4.6}
\]
The inverse M\"obius transformation is
\[
 x=\frac{z-1}{z+1},                                      \tag{4.7}\label{4.7}
\]
which is strictly increasing from \((-1,1]\) onto
\(({-}\infty,0]\).  For \(m\geq2\), define
\[
 \rho_i=\frac{\xi_i-1}{\xi_i+1}\quad(1\leq i\leq m),
 \qquad
 r_i=\frac{\eta_i-1}{\eta_i+1}\quad(1\leq i<m),
 \qquad r_m=0.
\]
Equations \eqref{4.3}--\eqref{4.6} give the strict interlacing order
\[
 \rho_1<r_1<\rho_2<\cdots<r_{m-1}<\rho_m<r_m=0.           \tag{4.8}\label{4.8}
\]
These are all the zeros, because \(F=F_{n,k}\) and \(G\) both have degree
\(m\) and positive leading coefficients:
\[
 [x^m]F=\binom nm\binom km>0,
 \qquad
 [x^m]G=
 \frac{\binom{n-1}{m-1}\binom{k-1}{m-1}}
      {\binom{m+1}{2}}>0.                                \tag{4.9}\label{4.9}
\]
When \(m=1\), the second Jacobi polynomial in \eqref{4.4} is
\(P_0^{(2,\beta)}=1\), so it has no zeros.  Directly,
\(F(x)=1+Mx\) and \(G(x)=x\), with respective zeros \(-1/M\) and \(0\).
Thus \eqref{4.8}, interpreted as \(-1/M<0\), holds for every positive
\(n,k\).  In particular, \(F\) and \(G\) have simple real zeros and no
common zero.

\begin{lemma}[Positive residues]\label{lem:residues}
There are positive real numbers \(c,a_1,\ldots,a_m\) such that
\[
 \frac{F(\zeta)}{G(\zeta)}
 =c+\sum_{j=1}^m\frac{a_j}{\zeta-r_j},
 \qquad
 c=\frac{[x^m]F}{[x^m]G},
 \qquad
 a_j=\frac{F(r_j)}{G'(r_j)}.                             \tag{4.10}\label{4.10}
\]
Consequently, \(F/G\) maps \(\HH\) strictly into the lower half-plane.
\end{lemma}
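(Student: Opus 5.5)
Since \(F\) and \(G\) both have degree \(m\) and \(G\) has the \(m\) simple real zeros \(r_1<\cdots<r_m=0\) from \eqref{4.8}, the rational function \(F/G\) is proper up to a constant. First I will do the partial fraction decomposition. Polynomial division gives \(F=cG+R\) with \(\deg R<m\) and \(c=[x^m]F/[x^m]G\), which is positive by \eqref{4.9}. Because the zeros of \(G\) are simple, Lagrange interpolation at the nodes \(r_j\) gives
\[
 \frac{R}{G}=\sum_{j=1}^m\frac{R(r_j)}{G'(r_j)(\zeta-r_j)}
 =\sum_{j=1}^m\frac{F(r_j)}{G'(r_j)(\zeta-r_j)}.
\]
This is exactly \eqref{4.10}.

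The main step is the sign of \(a_j=F(r_j)/G'(r_j)\). I will determine the signs of \(F(r_j)\) and \(G'(r_j)\) separately from the interlacing \eqref{4.8} and the positivity of both leading coefficients.

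\emph{Sign of \(G'(r_j)\).} The zero \(r_j\) is simple, and exactly \(m-j\) zeros of \(G\) lie to its right. Since the leading coefficient of \(G\) is positive, this gives \(\operatorname{sign}G'(r_j)=(-1)^{m-j}\).

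\emph{Sign of \(F(r_j)\).} The zeros of \(F\) lying to the right of \(r_j\) are \(\rho_{j+1},\ldots,\rho_m\). This count is \(m-j\); in particular there are none for \(j=m\), since \(\rho_m<0\). The leading coefficient of \(F\) is positive, so \(\operatorname{sign}F(r_j)=(-1)^{m-j}\). Here \(F(r_j)\neq0\) because \(F\) and \(G\) have no common zero.

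Hence every \(a_j>0\). The case \(m=1\) is immediate: \(F/G=(1+M\zeta)/\zeta=M+1/\zeta\).

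For the mapping property, take \(\zeta\in\HH\). Each term satisfies
\[
 \operatorname{Im}\frac{a_j}{\zeta-r_j}=\frac{-a_j\operatorname{Im}\zeta}{|\zeta-r_j|^2}<0,
\]
because \(a_j>0\) and \(r_j\) is real. The constant \(c\) is real, so it does not change the imaginary part. Therefore \(\operatorname{Im}(F/G)(\zeta)<0\), as long as \(m\ge1\) so that at least one term is present.

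The only place requiring care is the sign count in the previous step. It depends on the strict alternation \eqref{4.8} together with the endpoint fact \(\rho_m<r_m=0\). The rest is routine.
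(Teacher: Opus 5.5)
Your proof is correct and follows the paper's route: partial fractions from the simple real zeros of \(G\), and the common sign \((-1)^{m-j}\) of \(F(r_j)\) and \(G'(r_j)\), read off from the interlacing (4.8) and the positive leading coefficients. The termwise imaginary-part computation then finishes, and your explicit division-plus-Lagrange derivation and your treatment of \(j=m\) via \(\rho_m<0\) (where the paper checks \(F(0)=G'(0)=1\)) are only cosmetic differences.
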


\begin{proof}
The zeros of \(G\) are simple, so partial fractions give \eqref{4.10}.
At the zero \(r_j\) of \(G\), the \(m-j\) zeros
\(\rho_{j+1},\ldots,\rho_m\) of \(F\) lie to its right.  The \(m-j\)
zeros \(r_{j+1},\ldots,r_m\) of \(G\) likewise lie to the right of
\(r_j\).  Since both polynomials have positive leading coefficients,
\(F(r_j)\) and \(G'(r_j)\) therefore have the same sign
\((-1)^{m-j}\), and hence \(a_j>0\).  The appended zero causes no exception,
since \(F(0)=G'(0)=1\).  For \(\zeta\in\HH\),
\[
 \operatorname{Im}\frac{F(\zeta)}{G(\zeta)}
 =-\operatorname{Im}(\zeta)
   \sum_{j=1}^m\frac{a_j}{|\zeta-r_j|^2}<0.              \tag{4.11}\label{4.11}
\]
There is no pole in \(\HH\), because all zeros of \(G\) are real.
\end{proof}

\begin{proof}[Proof of Theorem~\ref{thm:main-stability}]
Suppose that \(u,v\in\HH\) and \(\Layer_{n,k}(u,v)=0\).  Divide
\eqref{3.5} by the nonzero number \(G(u)G(v)\).  Since
\(G(x)=xT_{n,k}(x)\), we obtain
\[
 \frac{F(u)}{G(u)}\frac{F(v)}{G(v)}=C>0.                 \tag{4.12}\label{4.12}
\]
By Lemma~\ref{lem:residues}, both factors on the left lie strictly in the
lower half-plane.  Choose their arguments in \((-\pi,0)\).  The argument
of their product lies in \((-2\pi,0)\), which contains no multiple of
\(2\pi\), so the product cannot be a positive real number.  This
contradiction proves real stability.
\end{proof}

\section{Simple negative roots}

The same rational function proves more than diagonal real-rootedness.  Put
\[
 \alpha=\sqrt C=\frac12\sqrt{m(m+1)M(M+1)},
 \qquad
 Q_\pm(x)=F(x)\pm\alpha G(x).                            \tag{5.1}\label{5.1}
\]
Then
\[
 \Ant_{[2]\times[n]\times[k]}(x)=Q_+(x)Q_-(x).          \tag{5.2}\label{5.2}
\]
Let \(h=F/G\).  Differentiating \eqref{4.10} on the real line gives
\[
 h'(t)=-\sum_{j=1}^m\frac{a_j}{(t-r_j)^2}<0              \tag{5.3}\label{5.3}
\]
away from the poles.  Thus on every bounded interval
\((r_j,r_{j+1})\), the function \(h\) decreases from \(+\infty\) to
\(-\infty\).  On the outer intervals its ranges are
\[
 h(({-}\infty,r_1))=({-}\infty,c),
 \qquad
 h((r_m,\infty))=(c,\infty),                             \tag{5.4}\label{5.4}
\]
where \(r_m=0\), and a calculation from \eqref{4.9} gives
\[
 c=\frac{M(m+1)}2,\qquad c\geq\alpha,
 \qquad c=\alpha\ \Longleftrightarrow\ M=m.             \tag{5.5}\label{5.5}
\]

A root of \(Q_\pm\) is not a pole of \(h\), because \(F\) and \(G\)
have no common zero; it is therefore a solution of \(h=\mp\alpha\).
On each of the \(m-1\) bounded pole intervals, each target value is attained
exactly once.  Since \(-\alpha<c\), the factor \(Q_+\) has one further root
in \(({-}\infty,r_1)\) and no root in the right outer interval
\((r_m,\infty)\).  If \(M>m\), then \(\alpha<c\), so \(Q_-\) likewise has
one further root in \(({-}\infty,r_1)\) and none in \((r_m,\infty)\).
If \(M=m\), then \(\alpha=c\), and \eqref{5.4} shows that \(Q_-\) has no
finite root in either outer interval; it has only the \(m-1\) bounded roots.
There is exactly one degree drop in that case.  Indeed, from \eqref{4.10},
\[
 h(t)-c=\frac{\sum_ja_j}{t}+O(t^{-2})
 \qquad(t\longrightarrow\infty),
\]
and \(G(t)=([x^m]G)t^m+O(t^{m-1})\), where \([x^m]G>0\).  Therefore
\[
 Q_-(t)=([x^m]G)\Bigl(\sum_ja_j\Bigr)t^{m-1}+O(t^{m-2}),
 \qquad
 \lc(Q_-)=([x^m]G)\sum_ja_j>0.
\]
Thus \(Q_-\) has degree exactly \(m-1\).
The formula includes \(m=M=1\), when \(Q_-=1\).  These real roots exhaust
the factors: \(Q_+\) has degree \(m\); \(Q_-\) has degree \(m\) when
\(M>m\) and degree \(m-1\) when \(M=m\).

At a finite root \(q\) of either factor,
\[
 Q_\pm'(q)=G(q)h'(q)\neq0,                               \tag{5.6}\label{5.6}
\]
so every root of each factor is simple.  The two factors have no common
root: such a root would satisfy both \(F=0\) and \(G=0\), contradicting
\eqref{4.8}.  Hence their product \eqref{5.2} is square-free and all of
its roots are real.

Let \(w\) be the width of \([2]\times[n]\times[k]\), and write
\[
 \Ant_{[2]\times[n]\times[k]}(x)=\sum_{j=0}^w b_jx^j.
\]
A maximum antichain of size \(w\) has an antichain subset of every size
from \(0\) to \(w\).  Therefore \(b_j>0\) for every \(j\).  The polynomial
has neither a positive zero nor the zero \(0\), and all its zeros are real;
they are consequently strictly negative.  This completes the proof of the
root assertion in Theorem~\ref{thm:simple-negative}.

Newton's inequalities now give, for \(1\leq j<w\),
\[
 b_j^2\geq b_{j-1}b_{j+1}
 \frac{(j+1)(w-j+1)}{j(w-j)}
 >b_{j-1}b_{j+1}.                                       \tag{5.7}\label{5.7}
\]
Thus the coefficient sequence is strictly log-concave and unimodal.  The
finite Aissen--Schoenberg--Whitney characterization
\cite{AissenSchoenbergWhitney} says that a nonnegative finite coefficient
sequence, extended by zeros, is \(\PF\) precisely when its generating
polynomial has only real nonpositive zeros.  Hence the coefficient sequence
\((b_0,\ldots,b_w)\), extended by zeros, is \(\PF\), proving the remaining
assertions of
Theorem~\ref{thm:simple-negative}.  These consequences concern the diagonal
specialization only and do not imply a multivariate negative-dependence
property for \(\Layer_{n,k}\).

Finally, permuting the three chain factors proves the same simple-negative
root statement whenever a side has length \(2\).  If a side has length
\(1\), the antichain polynomial is \(F_{n,k}\), whose simple strictly
negative zeros follow from \eqref{4.3}, \eqref{4.5}, and the \(m=1\)
calculation.  This proves the side-length extension stated in the
introduction.

\section{Reciprocity and gamma-positivity}

Fix \(m\geq1\) and specialize to \((n,k)=(m,m+1)\).  In this section let
\[
\begin{gathered}
 F=F_{m,m+1},\qquad T=T_{m,m+1},\qquad G=xT,\\
 d=\sqrt{m(m+2)},\qquad
 s=\sqrt{\binom{m+1}{2}\binom{m+2}{2}}
   =\frac{m+1}{2}d,\\
 Q_\pm=F\pm sG,
 \qquad \rho=m+1+d.
\end{gathered}                                            \tag{6.1}\label{6.1}
\]
Then \(\Ant_{[2]\times[m]\times[m+1]}=Q_+Q_-\).

For \(0\leq r\leq m\), put \(f_r=[x^r]F\) and \(g_r=[x^r]G\).  The
closed form gives the exact coefficients
\[
 f_r=\binom mr\binom{m+1}r,\qquad g_0=0,
 \qquad
 g_r=\frac{\binom{m-1}{r-1}\binom m{r-1}}
            {\binom{r+1}{2}}\quad(1\leq r\leq m).        \tag{6.2}\label{6.2}
\]
In particular,
\[
 \frac{g_r}{f_r}=\frac{2r}{m(m+1)(r+1)}
 \quad(0\leq r\leq m),
 \qquad
 \frac{f_{m-r}}{f_r}=\frac{m-r+1}{r+1}.                 \tag{6.3}\label{6.3}
\]
The first ratio at \(r=0\) is interpreted using \(g_0=0\).

Since \(d^2=m(m+2)\), equations \eqref{6.2}--\eqref{6.3} imply the two
coefficient identities
\[
\begin{aligned}
 f_{m-r}
  &=(m+1)f_r-\frac{m+1}{2}d^2g_r,\\
 \frac{m+1}{2}g_{m-r}
  &=f_r-\frac{(m+1)^2}{2}g_r
 \qquad(0\leq r\leq m).
\end{aligned}                                             \tag{6.4}\label{6.4}
\]
To verify the first identity in \eqref{6.4}, divide its right-hand side by
\(f_r>0\) to obtain
\[
 (m+1)-\frac{(m+1)m(m+2)}2
       \frac{2r}{m(m+1)(r+1)}
 =\frac{m-r+1}{r+1}.
\]
For \(0<r<m\), after division by \(f_r>0\), both sides of the second
identity similarly reduce to \((m-r)/(m(r+1))\).  The endpoints \(r=0,m\)
follow directly from \(f_0=1\), \(f_m=m+1\), \(g_0=0\), and
\(g_m=2/(m+1)\).

Since \((m+1+d)(m+1-d)=1\), we have
\(\rho^{-1}=m+1-d\).  Using \eqref{6.4} coefficientwise,
\[
\begin{aligned}
 \rho(f_r-sg_r)
 &=\left((m+1)f_r-\frac{m+1}{2}d^2g_r\right)
   +d\left(f_r-\frac{(m+1)^2}{2}g_r\right)\\
 &=f_{m-r}+sg_{m-r},\\
 \rho^{-1}(f_r+sg_r)
 &=\left((m+1)f_r-\frac{m+1}{2}d^2g_r\right)
   -d\left(f_r-\frac{(m+1)^2}{2}g_r\right)\\
 &=f_{m-r}-sg_{m-r}.
\end{aligned}
\]
We have therefore proved the reciprocal factor identities
\[
 \boxed{\quad
 x^mQ_+(1/x)=\rho Q_-(x),
 \qquad
 x^mQ_-(1/x)=\rho^{-1}Q_+(x).
 \quad}                                                   \tag{6.5}\label{6.5}
\]
The leading coefficients of \(Q_+\) and \(Q_-\) are, respectively,
\[
 m+1+d=\rho,\qquad m+1-d=\rho^{-1}.                      \tag{6.6}\label{6.6}
\]
Thus their product has exact degree \(2m\), leading coefficient \(1\),
and constant coefficient \(1\).  Multiplying \eqref{6.5} gives
\[
 x^{2m}\Ant_{[2]\times[m]\times[m+1]}(1/x)
 =\Ant_{[2]\times[m]\times[m+1]}(x),                   \tag{6.7}\label{6.7}
\]
so the polynomial is palindromic.

Write \(A(x)=\Ant_{[2]\times[m]\times[m+1]}(x)\).  By
Theorem~\ref{thm:simple-negative}, its \(2m\) zeros are simple and strictly
negative.  Equation \eqref{6.7} pairs every zero with its reciprocal.  The
fixed point \(-1\) cannot be a zero: differentiating \eqref{6.7} gives
\[
 A'(x)=2m x^{2m-1}A(1/x)-x^{2m-2}A'(1/x).
\]
If \(A(-1)=0\), evaluation at \(x=-1\) would give
\(A'(-1)=-A'(-1)\), hence \(A'(-1)=0\), contrary to simplicity.
Consequently the zeros can be labeled
\[
 -\lambda_1,-\lambda_1^{-1},\ldots,
 -\lambda_m,-\lambda_m^{-1},
 \qquad \lambda_j>1.
\]
Since \(A\) is monic by \eqref{6.6}, it follows that
\[
\begin{aligned}
 A(x)
 &=\prod_{j=1}^m(x+\lambda_j)(x+\lambda_j^{-1})\\
 &=\prod_{j=1}^m\bigl((1+x)^2+\delta_jx\bigr),
 \qquad
 \delta_j=\lambda_j+\lambda_j^{-1}-2
          =\frac{(\lambda_j-1)^2}{\lambda_j}>0.
\end{aligned}
\]
Expanding the product in elementary symmetric functions gives
\[
 \Ant_{[2]\times[m]\times[m+1]}(x)
 =\sum_{i=0}^m
 e_i(\delta_1,\ldots,\delta_m)x^i(1+x)^{2m-2i}.          \tag{6.8}\label{6.8}
\]
Thus \(\gamma_i=e_i(\delta_1,\ldots,\delta_m)>0\) for
\(0\leq i\leq m\).  This proves Theorem~\ref{thm:gamma-intro} and
Conjecture~4.5 of Ding and Dong.

\section{Further questions}

Further questions include a combinatorial interpretation of \eqref{6.8} and
a classification of boxes with all three sides at least \(3\) whose antichain
polynomials are real-rooted.

\medskip
\noindent\textbf{Data availability.}
The reproducibility package is archived at
\href{https://doi.org/10.5281/zenodo.22082487}
{\nolinkurl{doi:10.5281/zenodo.22082487}}.

\medskip
\noindent\textbf{Declaration of generative AI and AI-assisted technologies
in the manuscript preparation process.}
During preparation of this work, the author used OpenAI Codex for literature
searches, exploratory calculations, verification-code development, and
language editing. Its outputs were not used as mathematical evidence. The
author checked the sources, arguments, and computations and takes full
responsibility for the manuscript.

\enlargethispage{3\baselineskip}

\end{document}